\newtheorem{Theorem}{Theorem}[section]
\theoremstyle{plain}
\newtheorem{Lemma}{Lemma}[section]
\title{Improved estimators in Bell regression model with application}
\author{Solmaz Seifollahi$^1$, Hossein Bevrani$^2$\footnote{Corresponding author} \space  and Zakariya Yahya Algamal$^3$\\
\small {$^1$ Department of Statistics, University of Tabriz, Tabriz, Iran}\\
\small {Email: s.seifollahi@tabrizu.ac.ir}\\
\small {$^2$ Department of Statistics, University of Tabriz, Tabriz, Iran}\\
\small {Email: bevrani@gmail.com}\\
\small {$^3$  College of Computer Science and Mathematics, University of Mosul, Mosul, Iraq} \\
\small {Email: zakariya.algamal@uomosul.edu.iq}
}
\date{}
\begin{document}

\maketitle

\noindent{\bf {\em Abstract:}}

In this paper, we propose the application of shrinkage strategies to estimate coefficients in the Bell regression models when prior information about the coefficients is available. The Bell regression models are well-suited for modeling count data with multiple covariates.
Furthermore, we provide a detailed explanation of the asymptotic properties of the proposed estimators, including asymptotic biases and mean squared errors. To assess the performance of the estimators, we conduct numerical studies using Monte Carlo simulations and evaluate their simulated relative efficiency.
The results demonstrate that the suggested estimators outperform the unrestricted estimator when prior information is taken into account. Additionally, we present an empirical application to demonstrate the practical utility of the suggested estimators.

\vskip 3mm
\noindent {\bf {\em  Keywords and phrases:}} Bell regression, Monte Carlo simulation, Relative efficiency, James-Stein-type estimator.
\vskip 3mm
\noindent {\bf {\em {\color{blue} MSC2020 subject classifications: 62J12, 62F07, 62F12, 62F30}}}
\vskip 6mm

\section{Introduction}
The Poisson regression model (PRM) is commonly used for response variables in the form of count data. However, the PRM assumes that the mean and variance are equal, which is rarely the practice case. This can lead to errors in inference, especially when the data exhibits over-dispersion. In such cases, an alternative approach is to use Bell regression models (BRMs). BRMs have gained popularity in various domains, including traffic accident analysis, insurance claims data, medical research (such as analyzing the relationship between hospital admissions and beds), and environmental studies (e.g., analyzing the relationship between fish catch and water temperature).

Previous studies on BRMs have primarily focused on estimating the model coefficients using the maximum likelihood method, initially introduced by \cite{Castellares}. Subsequently, researchers have addressed the issue of multicollinearity in BRMs and proposed various estimators, such as the Liu estimator (\cite{Majid}), Liu-type estimator (\cite{Ertan}), Ridge estimator (\cite{Amin}), modified Kibria-Lukman estimator (\cite{Shewaa}) and Jacknifed Kibria-Lukman estimator (\cite{Abduljabbar}).

In regression models where prior information about the coefficients $\boldsymbol{\beta}$ is available, shrinkage strategies, such as the pretest approach (\cite{Bancroft}), Stein estimator (\cite{Stein}), and positive Stein estimators (\cite{Kibria}), can be applied to improve coefficient estimation. This involves considering both a model without restrictions and a restricted model, and combining estimators based on these models to enhance the accuracy of the estimates. For more detailed information, refer to \cite{Ahmed} and \cite{Kibria}.

Shrinkage strategies have been successfully applied in various regression models, including the inverse Gaussian regression model\cite{Akram}, the Poisson regression model by \cite{Amin2}, the Gamma regression model by \cite{Arabi}, the Beta regression model by \cite{Seifollahi}. However, to the best of our knowledge, these shrinkage methods have not been applied to BRMs. Therefore, in this paper, we introduce the James-Stein estimator, positive James-Stein estimator, and pretest estimator to BRMs, aiming to enhance coefficient estimation within this context.

The organization of the paper is as follows:  The BRM and the unrestricted estimator (UN) are introduced in Section \ref{sec2}.
In Section \ref{sec3}, the restricted, pretest, James-Stein, and positive James-Stein estimators in BRMs are introduced. The asymptotic properties of suggested estimators are provided in Section \ref{sec4}. The details of the Monte Carlo simulation experiment to compare the performance of the suggested estimators are provided in Section \ref{sec5}. The suggested estimators are applied to a real data set in Section \ref{sec6}. Finally, conclusive remarks are presented in Section \ref{sec7}.

\section{Bell regression model}\label{sec2}
The Bell regression model is suitable when the response variable follows a Bell distribution with the probability mass function as follows:
\begin{equation}\label{bell:d}
p(Y=y)= \dfrac{\phi^y e^{e^{\phi}+1} B_y}{y!}    \qquad y=0, 1, 2, \ldots
\end{equation}
where $ \phi>0 $ and $B_y=\dfrac{1}{e} \sum_{l=0}^\infty \dfrac{l^n}{l!}$ denotes  the Bell numbers (\cite{Castellares, Bell1, Bell2}).
The Bell distribution in \eqref{bell:d} has the following properties:
\begin{align}
\mathbb{E}(Y)&= \phi e^{\phi}\label{bell:m}\\
\mathbb{V}(Y)&= \phi e^{\phi}(1+ \phi)\label{bell:v}
\end{align}
The above expressions show that the value of the variance of Bell distribution is larger than the mean. Consequently, this model is appropriate for the count data with a variance larger than its mean in contrary to the Poisson distribution.

The probability mass function in \eqref{bell:d} shows that this distribution belongs to the exponential family of distributions and we can state this model belongs to the generalized linear models (GLM). Assume that $\mu=\phi e^{\phi}$, then $\phi=W_0(\mu)$ which stands for the Lambert function. Therefore, the Bell distribution can be re-parameterized as follows:
\begin{equation}\label{ndist}
p(Y=y)= e^{1- e^{W_0(\mu)}}\dfrac{W_0(\mu)^y B_y}{y!}    \qquad y=0, 1, 2, \ldots
\end{equation}
In the context of GLM, the relationship between the mean of the response variable and the covariates is  conditionally established by link function. The linear function is expressed as $\boldsymbol{\eta}_i= \boldsymbol{x}_i^T \boldsymbol{\beta}$ for $i=1, 2, \ldots, n$ where $\boldsymbol{x}_i=(1, x_{i1}, x_{i2}, \ldots, x_{ip})^T$ is the $i$th observation of the covariates and $ \boldsymbol{\beta}=(\beta_0, \beta_1, \beta_2, \ldots, \beta_p)^T $ is the model coefficients which are unknown and should be estimated.
The most important object in the GLMs, is the link function which plays a main key to connect the covariates with the mean of the response variable as $\mu_i= g^{-1}(\boldsymbol{x}_i^T \boldsymbol{\beta})$. So, in the BRM, the link function must map $(0, \infty)$ to $\mathbb{R}$ such as the log link function. Therefore, the BRM can be modeled by assuming
\begin{equation}\label{modbell}
\mu_i= \boldsymbol{x}_i^T \boldsymbol{\beta} e^{\boldsymbol{x}_i^T \boldsymbol{\beta}}, \qquad i=1, 2, \ldots, n
\end{equation}
The coefficients estimation in the BRM is achieved through using the maximum likelihood estimator (MLE) based on the iteratively re-weighted least-squares algorithm.
The log-likelihood of BRM is given by:
\begin{equation}\label{log:lik}
\mathcal{L}(\boldsymbol{\beta})= \sum_{i=1}^n y_i log(e^{\boldsymbol{x}_i^T \boldsymbol{\beta}} e^{ e^{\boldsymbol{x}_i^T\boldsymbol{\beta}}}) +  \sum_{i=1}^n \bigg[1- e^{ e^{\boldsymbol{x}_i^T\boldsymbol{\beta}}e^{e^{\boldsymbol{x}_i^T\boldsymbol{\beta}}}}+ log B_{y_i}\bigg] - log \bigg(\prod_{i=1}^n y_i ! \bigg)
\end{equation}

Then, the MLE is obtained by setting the first derivative of equation \eqref{log:lik} to zero. This derivative cannot be solved analytically because it is nonlinear in $ \boldsymbol{\beta} $.
 To overcome this challenge, the Fisher-scoring algorithm can be employed. In each iteration of this algorithm, the parameter is updated using the following method.
\begin{equation}
\boldsymbol{\beta}^{(r+1)}=\boldsymbol{\beta}^{(r)}+I^{-1} (\boldsymbol{\beta}^{(r)}) S(\boldsymbol{\beta}^{(r)})
\end{equation}
where $ I (\boldsymbol{\beta})= - \mathbb{E}(\dfrac{\partial^2}{\partial \boldsymbol{\beta} \partial \boldsymbol{\beta}^T}  \mathcal{L}(\boldsymbol{\beta})) $ and $ S (\boldsymbol{\beta})= - \mathbb{E}(\dfrac{\partial}{\partial \boldsymbol{\beta}}  \mathcal{L}(\boldsymbol{\beta})) $.  Thus, the estimated coefficients are defined as
\begin{equation}
\hat{\boldsymbol{\beta}}_{MLE}=(\boldsymbol{X}^T \hat{\boldsymbol{V}} \boldsymbol{X})^{-1} \boldsymbol{X}\hat{\boldsymbol{V}} \hat{\boldsymbol{w}}
\end{equation}
where  $ \hat{\boldsymbol{V}}= diag(\hat{v}_1, \hat{v}_2, \ldots, \hat{v}_n)$,  $\hat{\boldsymbol{w}}= (\hat{w}_1, \hat{w}_2, \ldots, \hat{w}_n)^T$, $v_i=\dfrac{e^{2\hat{\eta}_i}}{\hat{\mathbb{V}}_i} $, $\hat{\mathbb{V}}_i= \hat{\mu}_i [1+ W_0(\hat{\mu}_i)]$ and $\hat{w}_i= log(\hat{\mu}_i)+\dfrac{y_i - \hat{\mu}_i}{\sqrt{v_i \hat{\mathbb{V}}_i}}$.
In this paper, we call the MLE, the UN and will donate that with $ \hat{\boldsymbol{\beta}}^{UN}$. Under regularity condition, we know that:
\begin{align}
\sqrt{n}(\hat{\boldsymbol{\beta}}^{UN}- \boldsymbol{\beta}) \sim N(\boldsymbol{0}, F^{-1})
\end{align}
where $\boldsymbol{F}= \boldsymbol{X}^T \hat{\boldsymbol{V}} \boldsymbol{X}$

\section{Suggested Estimators} \label{sec3}
In this section, we first obtain the restricted estimator (RE) based on the prior information about coefficients then we will use shrinkage methods to introduce a new estimator in BRMs.
\subsection{Restricted estimator}
when there exists some prior information about the coefficients of the model as linear restrictions, it should be considered in the model to get advantages of this information regarding the coefficients to increase the accuracy of the estimates. Consider that the prior information in the models can be involved by:
\begin{equation}\label{H0}
H_0 :~ \boldsymbol{H}\boldsymbol{\beta}=\boldsymbol{h} \quad vs \quad H_1 :~ \boldsymbol{H}\boldsymbol{\beta} \neq \boldsymbol{h}
\end{equation}
where $ \boldsymbol{H} $ and $ \boldsymbol{h} $ are specified as a $r \times (p+1)$ matrix and a vector of length $r$, respectively.
Following \cite{Kibria}, the RE in the BRM is defined by:
\begin{equation}\label{RE}
\hat{\boldsymbol{\beta}}^{RE}= \hat{\boldsymbol{\beta}}^{UN}- \boldsymbol{F}^{-1} \boldsymbol{H}^T (\boldsymbol{H} \boldsymbol{F}^{-1}\boldsymbol{H}^T)^{-1} (\boldsymbol{H}\hat{\boldsymbol{\beta}}^{UN} - \boldsymbol{h})
\end{equation}
The estimator in \eqref{RE} is based on the prior information in \eqref{H0}. Therefore before using this information, it would be preferred to check its significance. To do so, we can use the following test statistic:
\begin{align}\label{TestSt}
F_n &= 2 \big[\mathcal{L}(\hat{\boldsymbol{\beta}}^{UN})- \mathcal{L}(\hat{\boldsymbol{\beta}}^{RE})\big]\nonumber \\
&= (\boldsymbol{H}\hat{\boldsymbol{\beta}}^{UN} - \boldsymbol{h})^T (\boldsymbol{H} \boldsymbol{F}^{-1}\boldsymbol{H}^T)^{-1}(\boldsymbol{H}\hat{\boldsymbol{\beta}}^{UN} - \boldsymbol{h})
\end{align}
When sample size, $n$, increases the test statistic will follow a chi-square distribution with $r$ degrees of freedom.

\subsection{Pretest estimator}
The pretest estimator (PTE) has the following form
\begin{equation}\label{PTE}
\hat{\boldsymbol{\beta}}^{PTE}= \hat{\boldsymbol{\beta}}^{UN}+ (\hat{\boldsymbol{\beta}}^{RE}-\hat{\boldsymbol{\beta}}^{UN})I_{(F_n< \chi^2_{(r, \alpha)})}
\end{equation}
where $I(.)$ is the indicator function and $\alpha$ is the significant level for testing \eqref{H0}.
The pretest estimator has two options, when $H_0$ is accepted, $\hat{\boldsymbol{\beta}}^{PTE}= \hat{\boldsymbol{\beta}}^{RE}$ and when $H_0$ is rejected $\hat{\boldsymbol{\beta}}^{PTE}= \hat{\boldsymbol{\beta}}^{UN}$.

\subsection{James-Stein estimator}
This estimator combines the unrestricted and restricted estimators in the optimal way to dominate the UN.
The James-Stein estimator (JSE) is defined as:
\begin{equation}\label{SE}
\hat{\boldsymbol{\beta}}^{JSE}= \hat{\boldsymbol{\beta}}^{RE}+ \big(1- (r-2)F_n^{-1}\big)
(\hat{\boldsymbol{\beta}}^{RE}-\hat{\boldsymbol{\beta}}^{UN}) \qquad r \geq 3
\end{equation}

\subsection{Positive James-Stein estimator}
In the JSE, it is possible that  $1- (r-2)F_n^{-1}<0$ or equivalent to $F_n<r-2$ which causes over-shrinkage in the estimating process. To avoid this issue, we suggest the positive James-Stein estimator (PJSE) in the BRMs.
The PJSE combines  is defined as:
\begin{equation}\label{PSE}
\hat{\boldsymbol{\beta}}^{PJSE}= \hat{\boldsymbol{\beta}}^{RE}+ \big(1- (r-2)F_n^{-1}\big)^{+}
(\hat{\boldsymbol{\beta}}^{RE}-\hat{\boldsymbol{\beta}}^{UN})
\end{equation}
where $u^+= max \{0, u\}$ (\cite{Kibria}).

\section{Properties of Suggested Estimators}\label{sec4}
In this section, the aim is to provide the asymptotic properties of estimators in the previous section. To explore the properties when the prior information $\boldsymbol{H}\boldsymbol{\beta}=\boldsymbol{h}$ is wrong, we consider the sequence
of local alternatives:
\begin{equation}\label{Hn}
H_{n0}: ~ \boldsymbol{H}\boldsymbol{\beta}=\boldsymbol{h}+\dfrac{\boldsymbol{\gamma}}{\sqrt{n}}
\end{equation}
where $ \boldsymbol{\gamma} \in \mathbb{R}^r $ is a fixed vector of length $r$. It is obvious that the null hypothesis in \eqref{H0} is a special case of \eqref{Hn} and the value of $ \dfrac{\boldsymbol{\gamma}}{\sqrt{n}} $ shows the distance between the value of   $\boldsymbol{H}\boldsymbol{\beta}$ in \eqref{Hn} and \eqref{H0}.
\\
Under null-hypothesis in \eqref{Hn}, we define the asymptotic distributional bias of the estimator $\hat{\boldsymbol{\beta}}$ is defined as follows:
\begin{equation}\label{bias}
\mathbb{B}(\hat{\boldsymbol{\beta}})= \lim\limits_{n \to \infty} \mathbb{E} \big[ \sqrt{n}(\hat{\boldsymbol{\beta}}-\boldsymbol{\beta}) \big]
\end{equation}
and the asymptotic distributional mean squared error (AMSE) of the estimator $\hat{\boldsymbol{\beta}}$ is defined as:
\begin{equation}\label{cov}
AMSE(\hat{\boldsymbol{\beta}})= \lim\limits_{n \to \infty} \mathbb{E} \big[ \sqrt{n}(\hat{\boldsymbol{\beta}}-\boldsymbol{\beta}) \sqrt{n}(\hat{\boldsymbol{\beta}}-\boldsymbol{\beta})^T\big]
\end{equation}

We present the following lemma, which is proved by Judge and Bock \cite{judge}, that helps us to derive the properties of the suggested estimators.
\begin{Lemma}\label{lem1}
Let $Z_1= \sqrt{n}(\hat{\boldsymbol{\beta}}^{UN}-\boldsymbol{\beta})$, $Z_2= \sqrt{n}(\hat{\boldsymbol{\beta}}^{RE}-\boldsymbol{\beta})$ and $Z_3= \sqrt{n}(\hat{\boldsymbol{\beta}}^{UN}-\hat{\boldsymbol{\beta}}^{RE})$. Under null-hypothesis in \eqref{Hn} and regularity conditions of MLE, when $n$ increases:
\begin{equation}\label{need1}
 \left[ \begin{gathered}
  {Z_1} \hfill \\
  {Z_2} \hfill \\
  {Z_3} \hfill \\
\end{gathered}  \right] \sim N \left( {\left[ \begin{gathered}
  \pmb{0} \hfill \\
  -\boldsymbol{\kappa}\boldsymbol{\gamma}\hfill \\
  \boldsymbol{\kappa}\boldsymbol{\gamma} \hfill \\
\end{gathered}  \right] ,\left[ {\begin{array}{*{20}{c}}
 \boldsymbol{F}^{-1} & \boldsymbol{F}^{-1}- \boldsymbol{\kappa}_0 &  \boldsymbol{\kappa}_0 \\
 &  \boldsymbol{F}^{-1}- \boldsymbol{\kappa}_0 & \pmb{0} \\
 & &  \boldsymbol{\kappa}_0
\end{array}}\right]}\right)
\end{equation}
where $\boldsymbol{\kappa}_0= \boldsymbol{F}^{-1} \boldsymbol{H}^T (\boldsymbol{H} \boldsymbol{F}^{-1}\boldsymbol{H}^T)^{-1} \boldsymbol{H}\boldsymbol{F}^{-1}$.
\end{Lemma}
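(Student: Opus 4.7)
The plan is to express $Z_2$ and $Z_3$ as deterministic affine transformations of $Z_1 = \sqrt{n}(\hat{\boldsymbol{\beta}}^{UN} - \boldsymbol{\beta})$. Since standard MLE asymptotics for the log-likelihood \eqref{log:lik} already give $Z_1 \sim N(\mathbf{0}, \boldsymbol{F}^{-1})$, the lemma then reduces to reading off the mean and covariance of an affine image of an asymptotically normal vector.

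The key manipulation is to rewrite \eqref{RE} as $\hat{\boldsymbol{\beta}}^{UN} - \hat{\boldsymbol{\beta}}^{RE} = \boldsymbol{\kappa}\,(\boldsymbol{H}\hat{\boldsymbol{\beta}}^{UN} - \boldsymbol{h})$, where I set $\boldsymbol{\kappa} := \boldsymbol{F}^{-1}\boldsymbol{H}^T(\boldsymbol{H}\boldsymbol{F}^{-1}\boldsymbol{H}^T)^{-1}$, so that $\boldsymbol{\kappa}_0 = \boldsymbol{\kappa}\boldsymbol{H}\boldsymbol{F}^{-1} = \boldsymbol{F}^{-1}\boldsymbol{H}^T\boldsymbol{\kappa}^T$. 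Under the Pitman drift \eqref{Hn}, $\sqrt{n}(\boldsymbol{H}\hat{\boldsymbol{\beta}}^{UN} - \boldsymbol{h}) = \boldsymbol{H}Z_1 + \boldsymbol{\gamma}$, giving
\begin{equation*}
Z_3 = \boldsymbol{\kappa}\boldsymbol{H}\,Z_1 + \boldsymbol{\kappa}\boldsymbol{\gamma}, \qquad Z_2 = Z_1 - Z_3 = (\boldsymbol{I}-\boldsymbol{\kappa}\boldsymbol{H})\,Z_1 - \boldsymbol{\kappa}\boldsymbol{\gamma}.
\end{equation*}
The stated mean vector is then immediate from $\mathbb{E}(Z_1) \to \mathbf{0}$. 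The six covariance blocks collapse through the single identity $\boldsymbol{\kappa}\boldsymbol{H}\boldsymbol{F}^{-1} = \boldsymbol{F}^{-1}\boldsymbol{H}^T\boldsymbol{\kappa}^T = \boldsymbol{\kappa}\boldsymbol{H}\boldsymbol{F}^{-1}\boldsymbol{H}^T\boldsymbol{\kappa}^T = \boldsymbol{\kappa}_0$: this yields $\mathrm{Cov}(Z_1,Z_3) = \boldsymbol{\kappa}_0$, $\mathrm{Cov}(Z_1,Z_2) = \boldsymbol{F}^{-1} - \boldsymbol{\kappa}_0$, $\mathrm{Var}(Z_3) = \boldsymbol{\kappa}_0$, $\mathrm{Var}(Z_2) = \boldsymbol{F}^{-1} - 2\boldsymbol{\kappa}_0 + \boldsymbol{\kappa}_0 = \boldsymbol{F}^{-1} - \boldsymbol{\kappa}_0$, and the crucial orthogonality $\mathrm{Cov}(Z_2,Z_3) = (\boldsymbol{I}-\boldsymbol{\kappa}\boldsymbol{H})\boldsymbol{F}^{-1}\boldsymbol{H}^T\boldsymbol{\kappa}^T = \boldsymbol{\kappa}_0 - \boldsymbol{\kappa}_0 = \mathbf{0}$, which is what will later drive the clean James-Stein-type risk calculations.

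The only delicate point I anticipate is justifying that $Z_1$ keeps its $N(\mathbf{0}, \boldsymbol{F}^{-1})$ limit along the drifting sequence \eqref{Hn} rather than at a single fixed $\boldsymbol{\beta}$. I would handle this by a standard Taylor expansion of the score around the true $\boldsymbol{\beta}_n$ of the $n$-th model: since $\boldsymbol{\beta}_n = \boldsymbol{\beta}+O(n^{-1/2})$, the expected Fisher information converges to the same $\boldsymbol{F}$ and the score-based central limit theorem delivers the same Gaussian limit, so no genuinely new calculation is needed. A concluding appeal to Slutsky's theorem replaces $\boldsymbol{F}$ by its consistent plug-in $\boldsymbol{X}^T\hat{\boldsymbol{V}}\boldsymbol{X}$ inside $\boldsymbol{\kappa}$ and $\boldsymbol{\kappa}_0$ at no asymptotic cost, completing the proof.
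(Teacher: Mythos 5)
Your proof is correct and is essentially the canonical argument: the paper itself does not prove Lemma \ref{lem1} but merely cites Judge and Bock, and your affine-transformation derivation --- writing $Z_3=\boldsymbol{\kappa}\boldsymbol{H}Z_1+\boldsymbol{\kappa}\boldsymbol{\gamma}$ from \eqref{RE} and the Pitman drift \eqref{Hn}, then $Z_2=Z_1-Z_3$, and reading off means and covariance blocks via $\boldsymbol{\kappa}\boldsymbol{H}\boldsymbol{F}^{-1}=\boldsymbol{F}^{-1}\boldsymbol{H}^T\boldsymbol{\kappa}^T=\boldsymbol{\kappa}\boldsymbol{H}\boldsymbol{F}^{-1}\boldsymbol{H}^T\boldsymbol{\kappa}^T=\boldsymbol{\kappa}_0$ --- is exactly the route taken in that reference. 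All six blocks, including the key orthogonality $\mathrm{Cov}(Z_2,Z_3)=\boldsymbol{0}$, check out, and your closing remarks on the contiguous sequence and the Slutsky plug-in of $\boldsymbol{X}^T\hat{\boldsymbol{V}}\boldsymbol{X}$ address the only genuinely delicate points.
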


Based on Lemma \ref{lem1}, we provide the asymptotic properties of the suggested estimators in the subsequent theorems.

\begin{Theorem}\label{thm1}
Under null-hypothesis in \eqref{Hn} and regularity conditions, the asymptotic distributional bias of the suggested estimators are:
\begin{align*}
\mathbb{B}(\hat{\boldsymbol{\beta}}^{RE}) &= -\boldsymbol{\kappa \gamma}\\
\mathbb{B}(\hat{\boldsymbol{\beta}}^{JSE}) &= -(r-2) \boldsymbol{\kappa \gamma} \mathbb{E}(\chi^{-2}_r (\delta))\\
\mathbb{B}(\hat{\boldsymbol{\beta}}^{PJSE}) &= \mathbb{B}(\hat{\boldsymbol{\beta}}^{JSE})+\boldsymbol{\kappa \gamma} \Upsilon_r (r-2; \delta) \\
\mathbb{B}(\hat{\boldsymbol{\beta}}^{PTE}) &= -\boldsymbol{\kappa \gamma}\Upsilon_r \big(\chi^2_{(r, \alpha)}; \delta\big)
\end{align*}
where $\boldsymbol{\kappa}=  \boldsymbol{F}^{-1} \boldsymbol{H}^T (\boldsymbol{H} \boldsymbol{F}^{-1}\boldsymbol{H}^T)^{-1}$, $ \Upsilon_r \big(.; \delta\big) $ and $\mathbb{E}(\chi^{2k}_r (\delta))$ denote respectively for the cumulative distribution function and $k$th order moment of a non-central $\chi^2$ distribution with $r$ degrees of freedom and non-central parameter $\delta= \boldsymbol{\gamma}^T(\boldsymbol{H} \boldsymbol{F}^{-1}\boldsymbol{H}^T)^{-1}\boldsymbol{\gamma}$.
\end{Theorem}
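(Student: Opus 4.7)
My plan is to express each proposed estimator's centered, $\sqrt{n}$-scaled version as a linear combination of the jointly asymptotically Gaussian triple $(Z_1,Z_2,Z_3)$ from Lemma \ref{lem1} together with scalar coefficients that depend only on the test statistic $F_n$, and then pass to the limit via the Judge--Bock moment identity. The common engine is that both $Z_3$ and $F_n$ are built from the single Gaussian quantity $\boldsymbol{H}\hat{\boldsymbol{\beta}}^{UN}-\boldsymbol{h}$: by the definitions in Section \ref{sec3}, $Z_3=\boldsymbol{\kappa}\cdot\sqrt{n}(\boldsymbol{H}\hat{\boldsymbol{\beta}}^{UN}-\boldsymbol{h})$ and $F_n$ is its Mahalanobis quadratic form in the metric $(\boldsymbol{H}\boldsymbol{F}^{-1}\boldsymbol{H}^T)^{-1}$. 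Consequently, under $H_{n0}$ the Judge--Bock identity specializes to
\begin{align*}
\lim_{n\to\infty}\mathbb{E}\big[Z_3\,\psi(F_n)\big]\;=\;\boldsymbol{\kappa\gamma}\,\mathbb{E}\big[\psi(\chi^2_r(\delta))\big]
\end{align*}
for suitable Borel $\psi$, and each of the four bias formulas will come out by an appropriate choice of $\psi$.

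I would then proceed in order of increasing complexity. For RE, Lemma \ref{lem1} gives $\sqrt{n}(\hat{\boldsymbol{\beta}}^{RE}-\boldsymbol{\beta})=Z_2$ whose asymptotic mean is read off as $-\boldsymbol{\kappa\gamma}$. For PTE, the definition in \eqref{PTE} yields $\sqrt{n}(\hat{\boldsymbol{\beta}}^{PTE}-\boldsymbol{\beta})=Z_1-Z_3\,I_{(F_n<\chi^2_{(r,\alpha)})}$, and since $\mathbb{E}Z_1\to 0$ the identity with $\psi(u)=I(u<\chi^2_{(r,\alpha)})$ immediately produces $-\boldsymbol{\kappa\gamma}\,\Upsilon_r(\chi^2_{(r,\alpha)};\delta)$. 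For JSE, collecting terms in \eqref{SE} reduces the centered representation to $\sqrt{n}(\hat{\boldsymbol{\beta}}^{JSE}-\boldsymbol{\beta})=Z_1-(r-2)F_n^{-1}Z_3$, so applying the identity with $\psi(u)=u^{-1}$ delivers the stated $-(r-2)\boldsymbol{\kappa\gamma}\,\mathbb{E}[\chi^{-2}_r(\delta)]$.

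For PJSE, the extra work is purely algebraic: using the decomposition $(1-(r-2)F_n^{-1})^{+} = (1-(r-2)F_n^{-1}) - (1-(r-2)F_n^{-1})\,I_{(F_n\le r-2)}$ and comparing \eqref{PSE} with \eqref{SE} gives
\begin{align*}
\sqrt{n}(\hat{\boldsymbol{\beta}}^{PJSE}-\boldsymbol{\beta}) = \sqrt{n}(\hat{\boldsymbol{\beta}}^{JSE}-\boldsymbol{\beta}) + \big(1-(r-2)F_n^{-1}\big)Z_3\,I_{(F_n\le r-2)},
\end{align*}
whence applying the moment identity piece by piece to $Z_3\,I_{(F_n\le r-2)}$ and $F_n^{-1}Z_3\,I_{(F_n\le r-2)}$ and combining with $\mathbb{B}(\hat{\boldsymbol{\beta}}^{JSE})$ produces the announced $\boldsymbol{\kappa\gamma}\,\Upsilon_r(r-2;\delta)$ correction. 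The main obstacle will be the rigorous justification of the limit identity displayed above: one needs joint weak convergence of the vector $\sqrt{n}(\boldsymbol{H}\hat{\boldsymbol{\beta}}^{UN}-\boldsymbol{h})$ to its Gaussian limit (from which the chi-square limit of $F_n$ follows by continuous mapping), together with uniform integrability of the integrands $F_n^{-1}Z_3$ and $F_n^{-1}Z_3\,I_{(F_n\le r-2)}$ so that convergence in distribution transfers to convergence of expectations. Once these ingredients are assembled, the remaining calculations are routine linear algebra.
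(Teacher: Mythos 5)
Your overall strategy is the same as the paper's: express each centered, $\sqrt{n}$-scaled estimator through $Z_1,Z_2,Z_3$ of Lemma \ref{lem1} and evaluate terms of the form $\mathbb{E}[Z_3\,\psi(F_n)]$ by a Judge--Bock moment identity (your decompositions for RE, PTE and JSE are the intended ones, modulo the sign typo in \eqref{SE} that the paper itself silently corrects in its appendix, and your remark on uniform integrability of $F_n^{-1}Z_3$ is a legitimate point the paper glosses over). The genuine gap is in your ``common engine.'' The identity you display, $\lim_{n\to\infty}\mathbb{E}[Z_3\,\psi(F_n)]=\boldsymbol{\kappa\gamma}\,\mathbb{E}[\psi(\chi^2_{r}(\delta))]$, is not the Judge--Bock identity and is false in general: it amounts to replacing $\mathbb{E}[Z_3\,\psi(F_n)]$ by $\mathbb{E}[Z_3]\,\mathbb{E}[\psi(F_n)]$ in the limit, but $Z_3$ and $F_n$ are built from the same Gaussian vector and are not asymptotically independent. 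The correct identity carries a two-degree-of-freedom shift, $\lim_{n\to\infty}\mathbb{E}[Z_3\,\psi(F_n)]=\boldsymbol{\kappa\gamma}\,\mathbb{E}[\psi(\chi^2_{r+2}(\delta))]$, and that shift is precisely the nontrivial content of the lemma you are invoking. With the correct identity the biases come out as $-(r-2)\boldsymbol{\kappa\gamma}\,\mathbb{E}[\chi^{-2}_{r+2}(\delta)]$, $-\boldsymbol{\kappa\gamma}\,\Upsilon_{r+2}(\chi^2_{(r,\alpha)};\delta)$, and so on --- which is exactly what the paper's Appendix \ref{proof1} derives; the subscripts $r$ in the printed statement of Theorem \ref{thm1} are inconsistent with the paper's own proof and appear to be typographical. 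Your argument reproduces the printed formulas only by committing the same off-by-two error, so as a proof it does not go through.

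A second, smaller slip concerns PJSE. Since $(u)^{+}-u=-u\,I_{(u<0)}$, the decomposition should read $\sqrt{n}(\hat{\boldsymbol{\beta}}^{PJSE}-\boldsymbol{\beta})=\sqrt{n}(\hat{\boldsymbol{\beta}}^{JSE}-\boldsymbol{\beta})+\big((r-2)F_n^{-1}-1\big)Z_3\,I_{(F_n\le r-2)}$, i.e.\ with the opposite sign to the one you wrote, and the scalar factor $(r-2)F_n^{-1}-1$ cannot be dropped. Carrying it through the (corrected) moment identity gives a correction term of the form $\boldsymbol{\kappa\gamma}\big[(r-2)\,\mathbb{E}[\chi^{-2}_{r+2}(\delta)I_{(\chi^{2}_{r+2}(\delta)<r-2)}]-\Upsilon_{r+2}(r-2;\delta)\big]$ rather than the bare $\boldsymbol{\kappa\gamma}\,\Upsilon_{r}(r-2;\delta)$ appearing in the statement, so this term needs to be reworked rather than matched to the printed formula.
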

\begin{proof}
See Appendix \ref{proof1}.
\end{proof}
For the asymptotic distributional  mean squared error of the estimators, we have the following Theorem.
\begin{Theorem}\label{thm2}
Under null-hypothesis in \eqref{Hn} and regularity conditions, the asymptotic distributional  mean squared error of the suggested estimators are:
\begin{align*}
AMSE(\hat{\boldsymbol{\beta}}^{UN}) &= \boldsymbol{F}^{-1}\\
AMSE(\hat{\boldsymbol{\beta}}^{RE}) &= \boldsymbol{F}^{-1}- \boldsymbol{\kappa}_0 + \boldsymbol{\kappa \gamma}\boldsymbol{\kappa}^T\boldsymbol{\gamma}^T\\
AMSE(\hat{\boldsymbol{\beta}}^{JSE}) &= \boldsymbol{F}+2(r-2)\boldsymbol{\kappa \gamma} \boldsymbol{\kappa}^T\boldsymbol{\gamma}^T \mathbb{E}\bigg[ \chi^{-2}_{r+2} (\delta) \bigg]+ (r-2) \boldsymbol{\kappa}_0  \mathbb{E}\bigg[\chi^{-4}_{r+2} (\delta)\bigg]  -2(r-2) \boldsymbol{\kappa}_0 \mathbb{E}\bigg[\chi^{-2}_{r+2} (\delta)\bigg]\\
& \qquad + (r-2)^2 \boldsymbol{\kappa \gamma}\boldsymbol{\kappa}^T\boldsymbol{\gamma}^T   \mathbb{E}\bigg[ \chi^{-4}_{r+4} (\delta) \bigg]-2 (r-2) \boldsymbol{\kappa \gamma}\boldsymbol{\kappa}^T\boldsymbol{\gamma}^T  \mathbb{E}\bigg[ \chi^{-2}_{r+4} (\delta) \bigg] \\
AMSE(\hat{\boldsymbol{\beta}}^{PJSE}) &=AMSE(\hat{\boldsymbol{\beta}}^{JSE})-2\boldsymbol{\kappa \gamma}\boldsymbol{\kappa}^T\boldsymbol{\gamma}^T \bigg[ \Upsilon_{r+2}(r-2; \lambda)- \Upsilon_{r+4}(r-2; \delta) \bigg] \\
& \qquad + 3 \boldsymbol{\kappa}_0  \Upsilon_{r+2} (r-2; \delta) -2 (r-2)  \boldsymbol{\kappa \gamma}\boldsymbol{\kappa}^T\boldsymbol{\gamma}^T \mathbb{E}\bigg[\chi^{-2}_{r+4} (\delta)I_{(\chi^{-2}_{r+4} (\delta) < r-2)}\bigg] \\
& \qquad -2(r-2) \boldsymbol{\kappa}_0 \mathbb{E}\bigg[\chi^{-2}_{r+2} (\delta)I_{(\chi^{-2}_{2+2} (\delta) < r-2)}\bigg] +2\boldsymbol{\kappa \gamma}\boldsymbol{\kappa}^T\boldsymbol{\gamma}^T  \Upsilon_{r+4} (r-2; \delta)\\
AMSE(\hat{\boldsymbol{\beta}}^{PTE}) &= \boldsymbol{F}^{-1}+ \boldsymbol{\kappa\gamma}\boldsymbol{\kappa}^T\boldsymbol{\gamma}^T \big[\Upsilon_{r+4}\big(\chi^2_{(r, \alpha)}; \delta\big)-\Upsilon_{r+4}\big(\chi^2_{(r, \alpha)}; \delta\big) \big] - \boldsymbol{\kappa}_0 \Upsilon_{r+2} \big(\chi^2_{(r, \alpha)}; \delta\big)
\end{align*}
\end{Theorem}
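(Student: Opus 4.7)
The plan is to compute each AMSE by writing $\sqrt{n}(\hat{\boldsymbol{\beta}} - \boldsymbol{\beta})$ as a suitable function of the jointly normal vector $(Z_1, Z_2, Z_3)$ from Lemma~\ref{lem1}, forming the outer product, and passing the limit inside the expectation under the usual uniform-integrability/regularity assumptions for the MLE. Two structural facts drive everything. First, the zero covariance block in \eqref{need1} combined with joint normality gives asymptotic independence of $Z_2$ and $Z_3$, and together with $Z_1 = Z_2 + Z_3$ this decouples any cross term containing $Z_2$. Second, setting $W := \sqrt{n}(\boldsymbol{H}\hat{\boldsymbol{\beta}}^{UN} - \boldsymbol{h})$, under $H_{n0}$ we have $W \sim N(\boldsymbol{\gamma}, \boldsymbol{H}\boldsymbol{F}^{-1}\boldsymbol{H}^T)$ with $Z_3 = \boldsymbol{\kappa} W$ and $F_n = W^T(\boldsymbol{H}\boldsymbol{F}^{-1}\boldsymbol{H}^T)^{-1} W$; hence $F_n$ is a measurable function of $Z_3$ alone and $F_n \to \chi_r^2(\delta)$.

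The UN and RE cases are immediate: $AMSE(\hat{\boldsymbol{\beta}}^{UN}) = \mathbb{E}[Z_1 Z_1^T] = \boldsymbol{F}^{-1}$, and $AMSE(\hat{\boldsymbol{\beta}}^{RE}) = \mathrm{Var}(Z_2) + \mathbb{E}[Z_2]\mathbb{E}[Z_2]^T = (\boldsymbol{F}^{-1}-\boldsymbol{\kappa}_0) + \boldsymbol{\kappa}\boldsymbol{\gamma}\boldsymbol{\gamma}^T\boldsymbol{\kappa}^T$. For JSE, I would write $\sqrt{n}(\hat{\boldsymbol{\beta}}^{JSE} - \boldsymbol{\beta}) = Z_1 - (r-2)F_n^{-1} Z_3$, expand the outer product into four terms, substitute $Z_1 = Z_2 + Z_3$ in the cross term to split it into an $\mathbb{E}[Z_2]\,\mathbb{E}[F_n^{-1} Z_3]^T$ piece (using $Z_2 \perp (Z_3, F_n)$) plus $\mathbb{E}[F_n^{-1} Z_3 Z_3^T]$, and evaluate all the chi-square expectations using the Judge--Bock identities
\begin{align*}
\mathbb{E}[g(F_n)\,W] &= \boldsymbol{\gamma}\,\mathbb{E}[g(\chi_{r+2}^2(\delta))], \\
\mathbb{E}[g(F_n)\,W W^T] &= (\boldsymbol{H}\boldsymbol{F}^{-1}\boldsymbol{H}^T)\,\mathbb{E}[g(\chi_{r+2}^2(\delta))] + \boldsymbol{\gamma}\boldsymbol{\gamma}^T\,\mathbb{E}[g(\chi_{r+4}^2(\delta))],
\end{align*}
applied with $g(x)=x^{-1}$ and $g(x)=x^{-2}$, and pushing the $\boldsymbol{\kappa}$-factors through via $\boldsymbol{\kappa}(\boldsymbol{H}\boldsymbol{F}^{-1}\boldsymbol{H}^T)\boldsymbol{\kappa}^T = \boldsymbol{\kappa}_0$ to convert $W$-moments into $Z_3$-moments.

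For PJSE, I would use the decomposition
\begin{equation*}
\hat{\boldsymbol{\beta}}^{PJSE} - \boldsymbol{\beta} = (\hat{\boldsymbol{\beta}}^{JSE} - \boldsymbol{\beta}) - \big(1 - (r-2)F_n^{-1}\big)\, I_{(F_n < r-2)}\,(\hat{\boldsymbol{\beta}}^{UN} - \hat{\boldsymbol{\beta}}^{RE})
\end{equation*}
and expand the AMSE as $AMSE(\hat{\boldsymbol{\beta}}^{JSE})$ plus cross and quadratic correction terms. The new expectations all take the form $\mathbb{E}[h(F_n)\,I_{(F_n<r-2)}\,W W^T]$ or $\mathbb{E}[h(F_n)\,I_{(F_n<r-2)}\,W]$, and the same two identities (now with $g(x) = h(x)\,I_{(x<r-2)}$) generate truncated non-central chi-square moments together with the CDF factors $\Upsilon_{r+2}(r-2;\delta)$ and $\Upsilon_{r+4}(r-2;\delta)$. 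The PTE case is structurally identical and simpler: from $\sqrt{n}(\hat{\boldsymbol{\beta}}^{PTE} - \boldsymbol{\beta}) = Z_1 - I_{(F_n < \chi_{r,\alpha}^2)}\,Z_3$, the same identities with $g(x)= I_{(x<\chi_{r,\alpha}^2)}$ directly produce the CDF terms at the critical value.

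The principal obstacle is the PJSE bookkeeping: the positive-part indicator multiplied by $(r-2)F_n^{-1}$ produces several competing truncated-moment contributions attached to both $\boldsymbol{\kappa}_0$ and $\boldsymbol{\kappa}\boldsymbol{\gamma}\boldsymbol{\gamma}^T\boldsymbol{\kappa}^T$, and assembling them into the stated expression requires careful tracking of the degree-of-freedom shifts ($r+2$ versus $r+4$), of the $F^{-1}$ versus $F^{-2}$ weights, and of signs. Once Lemma~\ref{lem1} and the two Judge--Bock identities are in hand, the remaining work is essentially symbolic expansion and consolidation.
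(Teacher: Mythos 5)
Your plan follows essentially the same route as the paper's Appendix B proof: decompose each estimator in terms of $Z_1,Z_2,Z_3$ from Lemma~\ref{lem1}, use the zero covariance between $Z_2$ and $Z_3$ (via conditioning on $Z_3$) to reduce the cross terms, and evaluate the remaining expectations with the Judge--Bock identities that shift the degrees of freedom to $r+2$ and $r+4$, including their truncated versions for the indicator terms. The only substantive difference is in the PJSE step, where you correctly retain the factor $\big(1-(r-2)F_n^{-1}\big)$ in the correction term $-\big(1-(r-2)F_n^{-1}\big)I_{(F_n<r-2)}Z_3$, whereas the paper's appendix works with the simpler $+Z_3 I_{(F_n<r-2)}$; your decomposition is the more careful of the two.
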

\begin{proof}
See Appendix \ref{proof2}.
\end{proof}

\section{Simulation Study}\label{sec5}
In this section, we provide a simulation study to compare the performance of the suggested estimators in the previous section.
The criterion used for thus comparison is the simulated relative efficiency (SRE) which is defined as:
\begin{equation}\label{SRE}
SRE(\hat{\boldsymbol{\beta}})= \dfrac{SMSE(\hat{\boldsymbol{\beta}}^{UN})} {SMSE(\hat{\boldsymbol{\beta}})}
\end{equation}
where SMSE is abbreviation of simulated mean squared error and $ \hat{\beta} $ is one of the suggested estimators in this paper.

The $n$ observations of covariates are generated from a standard multivariate distribution, e. g. $N_p (\boldsymbol{0}, \boldsymbol{I}_p)$ where the number of covariates and observation are considered as $p= 3, 6, 12$ and $n=50, 100, 200$.
The observations of the response variable are generated from Bell distribution with the parameter $W(\mu_i)$ where
\begin{equation}
\mu_i= \exp \{\beta_0+\beta_1 x_{i1}+ \beta_2 x_{i2}+ \ldots + \beta_p x_{ip} \}
\end{equation}
The true coefficients are taken to be $\boldsymbol{\beta}= (0, 1, 1, \ldots, 1)$.
In this simulation study, our objective is to investigate the impact of the deviation from the true coefficient vector.Therefore,  the sub-model in \eqref{H0} is defined by the following equations:
\begin{align}
&\beta_0=\tau \label{rest1}\\
&\beta_i - \beta_{i+1}=0 \label{rest2} \qquad i= 1, 2, \ldots, p-1
\end{align}
which lead to the following matrix form:
\begin{equation}
\begin{pmatrix}
1 & 0& 0& 0& \cdots & 0 & 0\\
0 & 1& -1& 0& \cdots & 0 & 0\\
0 & 0& 1& -1& \cdots & 0 & 0\\
\vdots & \vdots & \vdots & \vdots & \ddots  & \vdots & \vdots\\
0 & 0& 0& 0& \cdots & 1 & -1\\
\end{pmatrix}
\begin{pmatrix}
\beta_0\\
\beta_1\\
\beta_2\\
\vdots\\
\beta_p\\
\end{pmatrix}
=
\begin{pmatrix}
\tau\\
0\\
0\\
\vdots\\
0
\end{pmatrix}
\end{equation}
Here the value of the $\tau$ reflects the departure value from the real value of restrictions.
Thus, $\tau=0$ means the real value of $\beta_0=0$ but as the $\tau$ increases, we get away from the real value of $\beta_0$. Here, we consider $\tau \in [0,1]$.

To calculate the value of SMSE in \eqref{SRE}, we replicate the designed experiment $1000$ times and compute it by:
\begin{equation}\label{SMSE}
SMSE(\hat{\boldsymbol{\beta}})= \dfrac{1}{1000} \sum_{r=1}^{1000} (\hat{\boldsymbol{\beta}}_r-\boldsymbol{\beta})^T(\hat{\boldsymbol{\beta}}_r-\boldsymbol{\beta})
\end{equation}
where $ \hat{\boldsymbol{\beta}}_r $ is the estimated value of $ \boldsymbol{\beta} $ at the $r$th replication.

Since we used the simulated relative efficiency, it means that the value of SRE larger than one indicates that $\hat{\boldsymbol{\beta}}$ performs better than $\hat{\boldsymbol{\beta}}^{UN}$.

The results of the simulation are reported in Table \ref{MSE:ex} and Figure \ref{fig1} . The following conclusions can be provided from the table:
\begin{itemize}
\item When $\tau=0$, the performance of the RE is best in all situations.  However, when the value of $\tau$ increases, the SRE of the RE sharply decreases such that it even becomes inefficient.
\item When $\tau=0$, the PJSE performs better than JSE but when $\tau$ gets away from zero, the performance of both estimators becomes the same.
\item The PTE is better than James-Stein type estimators when the $\tau=0$.
\item when $\tau=0$, by increasing the number of covariates, the SRE increases.
\item Generally, when the sample size increases, the SRE of all estimators decreases.
\end{itemize}

\begin{figure}
\centering
\includegraphics[width=14cm,keepaspectratio]{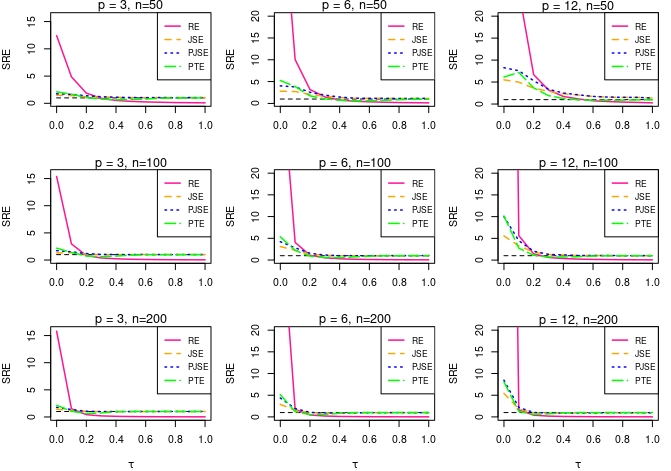}
\caption{The SRE of suggested estimators when $\tau \in [0, 1]$.}\label{fig1}
\end{figure}

\begin{landscape}
\begin{table}[!ht]
  \begin{center}
    \caption{The SRE of suggested estimators.}\label{MSE:ex}
 \footnotesize
\begin{tabular}{ccccccccccccccccc}
\toprule
        &   & \multicolumn{4}{c}{$p=3$}  && \multicolumn{4}{c}{$p=6$}  && \multicolumn{4}{c}{$p=12$} \\
        \cmidrule{3-6} \cmidrule{8-11} \cmidrule{13-16}
$n$ & $\tau$ &\text{RE} & \text{JSE}& \text{PJSE} & \text{PTE} &&  \text{RE} & \text{JSE}& \text{PJSE} & \text{PTE} &&  \text{RE} & \text{JSE}& \text{PJSE} & \text{PTE} \\
   \hline
50 & 0.0 &12.4228 &1.5033 &1.7488 &2.1112&&52.8411 &2.8042 &4.0199 &5.2595&&370.7309 &5.5062  &8.2652  &6.1435\\
    & 0.1 &4.8533 &1.4199 &1.6550 &1.6736&&10.0505 &2.7287 &3.7891 &3.8616&&25.8311 &4.9809  &7.5768  &7.1778\\
    & 0.2 &1.8652 &1.2032 &1.3908 &1.0606&&3.1735 &2.0691 &2.5510 &1.7379&&6.6886 &3.6138  &5.3081  &3.6787\\
    & 0.3 &0.9301 &1.1485 &1.2147 &0.7762&&1.4954 &1.6224 &1.9020 &1.0425&&3.2125 &2.8814  &3.3676  &1.9101\\
    & 0.4 &0.5249 &1.0989 &1.1139 &0.6520&&0.7642 &1.3463 &1.4600 &0.7175&&1.7167 &2.3471  &2.5025  &1.2587\\
    & 0.5 &0.3587 &1.0509 &1.0607 &0.6919&&0.5018 &1.1651 &1.2027 &0.5859&&1.1632 &2.0420  &2.1023  &0.9657\\
    & 0.6 &0.2570 &1.0387& 1.0395 &0.7722&&0.3642 &1.1575 &1.1665 &0.6122&&0.7327 &1.7205  &1.7331  &0.8564\\
    & 0.7 &0.1946 &1.0612 &1.0612 &0.9331&&0.2714 &1.1574 &1.1596 &0.7093&&0.5701 &1.5768  &1.5844  &0.8663\\
    & 0.8 &0.1373 &1.0446 &1.0446 &0.9732&&0.1952 &1.1257 &1.1257 &0.8596&&0.4382 &1.5252 & 1.5252  &0.8749\\
    & 0.9 &0.1138 &1.0583 &1.0583 &1.0000&&0.1587 &1.1389 &1.1389 &0.9591&&0.3591 &1.5130  &1.5130  &0.9384\\
    & 1.0 &0.0887 &1.0528 &1.0528 &1.0000&&0.1308 &1.1529 &1.1529 &0.9809&&0.2683 &1.3575  &1.3575  &0.9617\\
[5pt]
100 & 0.0 &15.3827 &1.3129 &1.7326 &2.1887&&46.4238 &3.0951 &4.2145 &5.3483&&287.1928 &5.5478 &10.0025 &10.1418\\
    & 0.1 &2.9689 &1.3566 &1.5177 &1.3964&&4.0473 &2.2777 &2.8153 &2.2206&&5.6220 &3.2858  &4.4510  &2.7358\\
    & 0.2 &0.8263 &1.1088 &1.1978 &0.7381&&1.0957 &1.3635 &1.5558 &0.8978&&1.4263 &1.8247  &2.0409  &1.1128\\
    & 0.3 &0.3903 &1.0057 &1.0348 &0.6325&&0.5166 &1.0901 &1.1461 &0.5871&&0.5941 &1.2645  &1.3110  &0.6970\\
    & 0.4 &0.2336 &1.0168 &1.0186 &0.7294&&0.2850 &0.9891 &0.9977 &0.5428&&0.3563 &1.1812  &1.1873  &0.7068\\
    & 0.5 &0.1431 &1.0007 &1.0007 &0.8982&&0.1797 &0.9989 &0.9989 &0.7181&&0.2199 &1.0406  &1.0415 & 0.7645\\
    & 0.6 &0.1056 &1.0281 &1.0281 &0.9882&&0.1223 &0.9618 &0.9618 &0.8796&&0.1523 &1.0569  &1.0569 & 0.9263\\
    & 0.7 &0.0760 &1.0070 &1.0070 &1.0000&&0.0938 &1.0290 &1.0290 &0.9761&&0.1174 &1.0597  &1.0597 & 0.9818\\
    & 0.8 &0.0576 &1.0166 &1.0166 &1.0000&&0.0708 &1.0071 &1.0071 &0.9825&&0.0866 &1.0180  &1.0180  &1.0000\\
    & 0.9 &0.0464 &1.0138 &1.0138 &1.0000&&0.0557 &1.0215 &1.0215 &1.0000&&0.0657 &1.0222  &1.0222 & 1.0000\\
    & 1.0 &0.0377 &1.0137 &1.0137 &1.0000&&0.0441 &1.0232 &1.0232 &1.0000&&0.0527 &1.0367  &1.0367  &1.0000 \\
    [5pt]
200 & 0.0 &15.7941 &1.4145 &1.7699 &2.1467&&48.3578 &2.8900 &4.3608 &5.1290&&255.0862 &5.4148 & 8.4551  &8.0564\\
    & 0.1 &1.5115 &1.2146 &1.3641 &1.0493&&1.7315 &1.5384 &1.9358 &1.2318&&1.6337 &1.8181  &2.1917  &1.2585\\
    & 0.2 &0.4170 &1.0304 &1.0441 &0.6273&&0.4516 &0.9773 &1.0433 &0.5496&&0.4001 &0.9824  &1.0181  &0.5894\\
    & 0.3 &0.1905 &0.9829 &0.9829 &0.7321&&0.2076 &0.9023 &0.9134 &0.5798&&0.1831 &0.9213  &0.9237  &0.7043\\
    & 0.4 &0.1115 &0.9992 &0.9995 &0.9621&&0.1085 &0.8724 &0.8724 &0.8384&&0.0997 &0.8904  &0.8904  &0.9415\\
    & 0.5 &0.0691 &0.9924 &0.9924 &1.0000&&0.0709 &0.9300 &0.9300 &1.0000&&0.0655 &0.9133  &0.9133  &1.0000\\
    & 0.6 &0.0490 &1.0042 &1.0042 &1.0000&&0.0486 &0.9951 &0.9951 &1.0000&&0.0435 &0.9214  &0.9214  &1.0000\\
    & 0.7 &0.0359 &1.0046 &1.0046 &1.0000&&0.0380 &0.9741 &0.9741 &1.0000&&0.0328 &0.9746  &0.9746  &1.0000\\
    & 0.8 &0.0276 &1.0070 &1.0070 &1.0000&&0.0278 &1.0019 &1.0019 &1.0000&&0.0245 &0.9617  &0.9617  &1.0000\\
    & 0.9 &0.0216 &1.0039 &1.0039 &1.0000&&0.0215 &0.9761 &0.9761 &1.0000&&0.0240 &0.9694  &0.9694  &1.0000\\
    & 1.0 &0.0171 &1.0105 &1.0105 &1.0000&&0.0183 &0.9994 &0.9994 &1.0000&&0.0152 &0.9497  &0.9497  &1.0000\\
\bottomrule
\end{tabular}
\end{center}
\end{table}
\end{landscape}

\section{Application to a real dataset}\label{sec6}
\cite{Myers} first used the dataset of Mine fracture to model count data. Nevertheless, since the variance is roughly 135 times greater than the mean (135.614), we were able to calculate the ratio of the distribution's variance to the mean and found that the PRM does not match the data. Since over-dispersion is present, the Bell regression is a better fit for this model.
 
To analyze the dataset, we utilized the bellreg package in the R programming language (\cite{bellreg}). The AIC value for the full model, considering all covariates, was $-1674.772$. The analysis revealed that variables $x_{1}$ and $x_{3}$ were not statistically significant. Consequently, we opted for a restricted model that only included variables $x_2$ and $x_{4}$. The AIC for the restricted model was -1675.556, indicating its superiority over the full model. Thus, we imposed restrictions on the model, specifically setting $\beta_1=0$ and $\beta_3=0$.
\\
The significance level for the pretest estimator was set at $0.05$. We employed the suggested estimators outlined in this paper to analyze the data. The estimated parameter values are presented in Table \ref{real1}. To assess the performance of the suggested estimators, we applied the bootstrap technique with a sample size of $n=40$ and $1000$ replications. Subsequently, we computed the bootstrapped relative efficiencies (BRE) for each bootstrap sample, with the results reported in the last row of Table \ref{real1}.
\\
The findings indicate that, in terms of BRE and standard error, the RE, positive James-Stein estimator, and pretest estimator outperformed the UN. However, for this particular dataset, the James-Stein estimator exhibited inefficiency. Among the suggested estimators, the RE demonstrated a higher BRE value (2.7154), indicating greater efficiency, while the James-Stein estimator had a lower BRE value (0.1750), denoting inefficiency.

\begin{table}[!ht]
  \begin{center}
    \caption{Coefficients, standard error (in the brackets) and BRE of suggested estimators.}\label{real1}
 \footnotesize
\begin{tabular}{cccccc}
\toprule
Covariates & \text{UN} &\text{RE} & \text{JSE}& \text{PJSE} & \text{PTE}   \\
\toprule
$x_1$ &-0.8087 (0.0154)&  0.0000  (0.0000)& 0.4774 (0.0395) & -0.0894 (0.0068)&  0.0013 (0.0013)\\
$x_2$ &1.7047  (0.0158)& 0.8629  (0.0021)& 0.3661 (0.0416) & 0.9564  (0.0071)& 0.8620 (0.0015)\\
$x_3$ &-13.1547 (0.5967)&  0.0000 (0.0000)&  8.0152 (1.3019) &-1.3660 (0.1869 )& -0.1101 (0.1101)\\
$x_4$ &0.0917  (0.0007)& 0.0728  (0.0003)& 0.0613 (0.0016) & 0.0748 (0.0003)&  0.0729 (0.0002)\\
\toprule
BRE & -& 2.7154 & 0.1750 & 2.5531 & 2.4947\\
\bottomrule
\end{tabular}
\end{center}
\end{table}

\section{Conclusion}\label{sec7}
In this paper, different types of shrinkage estimators, including the pretest, James-Stein, and positive James-Stein estimators, were applied based on both unrestricted and restricted estimators for the BRM. The asymptotic bias and mean squared error of the suggested estimators were derived under the local alternative hypothesis.
Furthermore, a simulation study was conducted to evaluate the performance of the suggested estimators. The results demonstrate that when the prior information on the coefficients is accurate, the suggested estimators exhibit significantly higher relative efficiencies compared to the UN.
To further validate our findings, the suggested estimators were applied to a real-world dataset. The results reaffirmed the superiority of the suggested estimators, as they consistently displayed high overall relative efficiencies.


\newpage

\appendix
\section{Proof of Theorem \ref{thm1}}\label{proof1}
For obtain the asymptotic distribution bias of the RE, by using Lemma \ref{lem1}, we have:
\begin{equation}
\mathbb{B}(\hat{\pmb{\beta}}^{RE})= \mathbb{E}\big(\sqrt{n}(\hat{\boldsymbol{\beta}}^{RE}-\boldsymbol{\beta})\big)= \mathbb{E}(Z_2)= - \boldsymbol{\kappa} \boldsymbol{\gamma}.
\end{equation}
For obtain the asymptotic distribution bias of the JSE, we have:
\begin{align}
\mathbb{B}\big(\hat{\boldsymbol{\beta}}^{JSE}\big)& =\mathbb{E}\big(\sqrt{n}(\hat{\boldsymbol{\beta}}^{JSE}-\boldsymbol{\beta})\big) \nonumber \\
&= \mathbb{E} \bigg[\sqrt{n} \bigg\{(\hat{\boldsymbol{\beta}}^{RE}- \boldsymbol{\beta})+ \big\{1-\dfrac{r-2}{F_n}\big\}\big(\hat{\boldsymbol{\beta}}^{UE}-\hat{\boldsymbol{\beta}}^{RE} \big) \bigg\} \bigg] \nonumber\\
& =\mathbb{E} \bigg[ Z_2 +  \big\{1-\dfrac{r-2}{F_n}\big\} Z_3 \bigg] \nonumber \\
& = \mathbb{E} \big(Z_2  \big) + \mathbb{E}\bigg[  \big\{1-\dfrac{r-2}{F_n}\big\} Z_3 \bigg] \nonumber
\end{align}
Following Lemma \ref{lem1}, we have:
\begin{align}
\mathbb{E}\big(\sqrt{n}(\hat{\boldsymbol{\beta}}^{JSE}-\boldsymbol{\beta})\big) &= -\boldsymbol{\kappa} \boldsymbol{\gamma} + \boldsymbol{\kappa} \boldsymbol{\gamma} \mathbb{E}\bigg[ 1-(r-2) \chi^{-2}_{r+2} (\delta) \bigg]  \nonumber \\
& = -(r-2) \boldsymbol{\kappa} \boldsymbol{\gamma} \mathbb{E}\bigg[ \chi^{-2}_{r+2} (\delta) \bigg]
\end{align}
To get the asymptotic distributional bias of the PJSE, we can write:
\begin{align}
\mathbb{B} \big(\hat{\boldsymbol{\beta}}^{PJSE}\big)& =\mathbb{E}\big(\sqrt{n}(\hat{\boldsymbol{\beta}}^{PJSE}-\boldsymbol{\beta})\big) \nonumber \\
&= \mathbb{E} \bigg[\sqrt{n} \bigg\{\hat{\boldsymbol{\beta}}^{RE}+\big\{1-\dfrac{r-2}{F_n}\big\}^{+}\big(\hat{\boldsymbol{\beta}}^{UN}-\hat{\boldsymbol{\beta}}^{RE}\big)- \boldsymbol{\beta} \bigg\} \bigg] \nonumber\\
& = \mathbb{E} \bigg[\sqrt{n} \bigg\{(\hat{\boldsymbol{\beta}}^{JSE}- \boldsymbol{\beta})+ \big(\hat{\boldsymbol{\beta}}^{UN}-\hat{\boldsymbol{\beta}}^{RE} \big) I_{(F_n < (r-2))} \bigg] \nonumber\\
& =  \mathbb{E} \bigg[\sqrt{n} (\hat{\boldsymbol{\beta}}^{JSE}- \boldsymbol{\beta})+ \sqrt{n}\big(\hat{\boldsymbol{\beta}}^{UN}-\hat{\boldsymbol{\beta}}^{RE} \big) I_{(F_n< (r-2))} \bigg] \nonumber\\
& =\mathbb{E} \bigg[\sqrt{n} (\hat{\boldsymbol{\beta}}^{JSE}- \boldsymbol{\beta})+ Z_3 I_{(F_n<(r-2))} \bigg] \nonumber\\
& = \mathbb{B}\big(\hat{\boldsymbol{\beta}}^{JSE}\big) + \mathbb{E}\bigg[I_{(F_n< (r-2))} Z_3 \bigg] \nonumber \\
& = \mathbb{B} \big(\hat{\boldsymbol{\beta}}^{JSE}\big) + \mathbb{E} (Z_3) \mathbb{E}\bigg[I_{(F_n< (r-2))}\bigg] \nonumber \\
& =  \mathbb{B} \big(\hat{\boldsymbol{\beta}}^{JSE}\big) + \boldsymbol{\kappa} \boldsymbol{\gamma}\Upsilon_{r+2} (r-2; \delta) \nonumber \\
& = -(r-2) \boldsymbol{\kappa} \boldsymbol{\gamma} \mathbb{E}\bigg[ \chi^{-2}_{r+2} (\delta) \bigg]+ \boldsymbol{\kappa} \boldsymbol{\gamma}\Upsilon_{r+2} (r-2; \delta) \nonumber.
\end{align}
Finally, for the asymptotic distributional bias of the pretest estimator, we have:
\begin{align}
\mathbb{B}\big(\hat{\boldsymbol{\beta}}^{PTE}\big)& =\mathbb{E}\big(\sqrt{n}(\hat{\boldsymbol{\beta}}^{PTE}-\boldsymbol{\beta})\big) \nonumber \\
& = \mathbb{E} \bigg[\sqrt{n} \bigg\{\hat{ (\boldsymbol{\beta}}^{UN}- \boldsymbol{\beta}) - (\hat{\boldsymbol{\beta}}^{UN}-\hat{\boldsymbol{\beta}}^{RE}) I_{(F_n \leq \chi^2_{(r, \alpha)})} \bigg\} \bigg] \nonumber\\
& =\mathbb{E} \bigg[Z_1- Z_3 I_{(F_n \leq \chi^2_{(r, \alpha)})} \bigg] \nonumber\\
& = \mathbb{E}  (Z_1)- \mathbb{E} \bigg[Z_3 I_{(F_n \leq \chi^2_{(r, \alpha)})} \bigg] \nonumber\\
& =- \mathbb{E}  (Z_3)\mathbb{E} \bigg[I_{(F_n \leq \chi^2_{(r, \alpha)})} \bigg] \nonumber\\
& = - \boldsymbol{\kappa} \boldsymbol{\gamma}\Upsilon_{r+2}(\chi^2_{(r, \alpha)}; \delta)\nonumber
\end{align}

\newpage
\section{Proof of Theorem \ref{thm2}}\label{proof2}
To obtain the AMSE of UN, we use \eqref{cov} and Lemma \ref{lem1}. Therefore:
\begin{align}
AMSE(\hat{\boldsymbol{\beta}}^{UN}) &=\mathbb{E}\bigg[ \sqrt{n}(\hat{\boldsymbol{\beta}}^{UN}-\boldsymbol{\beta})\sqrt{n}(\hat{\boldsymbol{\beta}}^{UN}-\boldsymbol{\beta})^T \bigg] \nonumber\\
&=  \mathbb{E}\bigg[ Z_1 Z_1^T \bigg] = \boldsymbol{F}^{-1}
\end{align}
For the AMSE of RE, we have:
\begin{align}
AMSE(\hat{\boldsymbol{\beta}}^{RE}) &=\mathbb{E}\bigg[ \sqrt{n}(\hat{\boldsymbol{\beta}}^{RE}-\boldsymbol{\beta})\sqrt{n}(\hat{\boldsymbol{\beta}}^{RE}-\boldsymbol{\beta})^T \bigg] \nonumber\\
& =  \mathbb{E}\bigg[ Z_2 Z_2^T \bigg] \nonumber\\
& =  cov(Z_2)+  \bigg[\mathbb{E}(Z_2)\bigg]\bigg[\mathbb{E}(Z_2)\bigg]^T\nonumber \\
& =   \big[ \boldsymbol{F}^{-1} - \boldsymbol{\kappa}_0 \big] +\boldsymbol{\kappa} \boldsymbol{\gamma} \boldsymbol{\gamma} ^T \boldsymbol{\kappa}^T
\end{align}
For AMSE of James-Stein estimator, we have:
\begin{align} \label{e1}
AMSE(\hat{\pmb{\beta}}^{JSE}) &=\mathbb{E}\bigg[ \sqrt{n}(\hat{\pmb{\beta}}^{JSE}-\pmb{\beta})\sqrt{n}(\hat{\pmb{\beta}}^{JSE}-\pmb{\beta})^T \bigg] \nonumber\\
& =  \mathbb{E}\bigg[ \bigg(Z_2+ \big\{1-\dfrac{r-2}{F_n}\big\} Z_3\bigg) \bigg(Z_2+ \big\{1-\dfrac{r-2}{F_n}\big\} Z_3\bigg)^T\bigg] \nonumber\\
& =  \mathbb{E}\bigg[Z_2 Z_2^T\bigg] + \mathbb{E}\bigg[Z_2  \big\{1-\dfrac{r-2}{F_n}\big\} Z_3^T\bigg]+ \mathbb{E}\bigg[Z_2^T \big\{1-\dfrac{r-2}{F_n}\big\} Z_3\bigg]  \nonumber\\
& \qquad + \mathbb{E}\bigg[ \big\{1-\dfrac{r-2}{F_n}\big\}^2 Z_3Z_3^T\bigg]\nonumber \\
& = cov(Z_2)+  \bigg[\mathbb{E}(Z_2)\bigg]\bigg[\mathbb{E}(Z_2)\bigg]^T + 2 \underbrace{\mathbb{E}\bigg[Z_2  \big\{1-\dfrac{r-2}{F_n}\big\} Z_3^T\bigg]}_{T_1}  \nonumber\\
& \qquad + \underbrace{\mathbb{E}\bigg[ \big\{1-\dfrac{r-2}{F_n}\big\}^2 Z_3Z_3^T\bigg]}_{T_2}
\end{align}
Now, following Lemma \ref{lem1}:
\begin{align}
T_1 & =  \mathbb{E}\bigg[Z_2  \big\{1-\dfrac{r-2}{F_n}\big\} Z_3^T\bigg] \nonumber\\
& =\mathbb{E}\bigg[ \mathbb{E}\bigg[Z_2  \big\{1-\dfrac{r-2}{F_n}\big\} Z_3^T\vert Z_3\bigg]\bigg] \nonumber
\end{align}
\begin{align}\label{m1}
T_1& =\mathbb{E}\bigg[ \mathbb{E}\bigg[Z_2 \vert Z_3\bigg] \big\{1-\dfrac{r-2}{F_n}\big\} Z_3^T\bigg] \nonumber\\
& =\mathbb{E}\bigg[ \bigg\{ \mathbb{E}(Z_2)+ cov(Z_2, Z_3) [cov(Z_3)]^{-1} \big( Z_3-\mathbb{E}(Z_2) \big)\bigg\} \big\{1-\dfrac{r-2}{F_n}\big\} Z_3^T\bigg] \nonumber\\
& = \mathbb{E}\bigg[-\boldsymbol{\kappa} \boldsymbol{\gamma} \big\{1-\dfrac{r-2}{F_n}\big\} Z_3^T\bigg] \nonumber\\
& =  -\boldsymbol{\kappa} \boldsymbol{\gamma}  \mathbb{E}\bigg[\big\{1-\dfrac{r-2}{F_n}\big\} Z_3^T\bigg] \nonumber\\
& =  -\boldsymbol{\kappa} \boldsymbol{\gamma}  \mathbb{E} (Z_3^T) \mathbb{E} \bigg[1-(r-2) \chi^{-2}_{r+2} (\delta)\bigg] \nonumber\\
&=-\boldsymbol{\kappa} \boldsymbol{\gamma}  \boldsymbol{\gamma}^T \boldsymbol{\kappa}^T \mathbb{E} \bigg[1-(r-2) \chi^{-2}_{r+2} (\delta)\bigg] \nonumber\\
& =  -\boldsymbol{\kappa} \boldsymbol{\gamma} \boldsymbol{\gamma}^T \boldsymbol{\kappa}^T + (r-2) \boldsymbol{\kappa} \boldsymbol{\gamma} \boldsymbol{\gamma}^T \boldsymbol{\kappa}^T \mathbb{E} \bigg[\chi^{-2}_{r+2} (\delta)\bigg]
\end{align}
For $M_2$, we have:
 \begin{align} \label{m2}
T_2 & = \mathbb{E}\bigg[ \big\{1-\dfrac{r-2}{F_n}\big\}^2 Z_3Z_3^T\bigg] \nonumber\\
& = cov(Z_3) \mathbb{E}\bigg[ \{1-(r-2) \chi^{-2}_{r+2} (\delta) \}^2\bigg]+ \bigg[\mathbb{E}(Z_3)\bigg] \bigg[\mathbb{E}(Z_3)\bigg]^T \mathbb{E}\bigg[ \{1-(r-2) \chi^{-2}_{r+4} (\delta) \}^2\bigg] \nonumber\\
& = \boldsymbol{\kappa}_0 \mathbb{E}\bigg[ \{1-(r-2) \chi^{-2}_{r+2} (\delta) \}^2\bigg]+ \boldsymbol{\kappa} \boldsymbol{\gamma} \boldsymbol{\gamma}^T \boldsymbol{\kappa}^T \mathbb{E}\bigg[ \{1-(r-2) \chi^{-2}_{r+4} (\delta) \}^2\bigg]
\end{align}
Consequently,
\begin{align}
AMSE(\hat{\pmb{\beta}}^{JSE})
&=  \big[\boldsymbol{F}^{-1} - \boldsymbol{\kappa}_0 \big] + \boldsymbol{\kappa} \boldsymbol{\gamma} \boldsymbol{\gamma}^T \boldsymbol{\kappa}^T -2\boldsymbol{\kappa} \boldsymbol{\gamma} \boldsymbol{\gamma}^T \boldsymbol{\kappa}^T +2 (r-2) \boldsymbol{\kappa} \boldsymbol{\gamma} \boldsymbol{\gamma}^T \boldsymbol{\kappa}^T \mathbb{E} \bigg[\chi^{-2}_{r+2} (\delta)\bigg] \nonumber \\
& \qquad + \boldsymbol{\kappa}_0 \mathbb{E}\bigg[ \{1-(r-2)\chi^{-2}_{q+2} (\lambda) \}^2\bigg]+ \boldsymbol{\kappa} \boldsymbol{\gamma} \boldsymbol{\gamma}^T \boldsymbol{\kappa}^T \mathbb{E}\bigg[ \{1-(r-2) \chi^{-2}_{r+4} (\delta) \}^2\bigg] \nonumber \\
& = \boldsymbol{F}^{-1}+ 2 (r-2) \boldsymbol{\kappa} \boldsymbol{\gamma} \boldsymbol{\gamma}^T \boldsymbol{\kappa}^T \mathbb{E}\bigg[ \chi^{-2}_{r+2} (\delta) \bigg] + (r-2)^2 \boldsymbol{\kappa}_0 \mathbb{E}\bigg[\chi^{-4}_{r+2} (\delta)\bigg]  \nonumber \\
& \qquad  -2(r-2)\boldsymbol{\kappa}_0 \mathbb{E}\bigg[\chi^{-2}_{r+2} (\delta)\bigg] + (r-2)^2 \boldsymbol{\kappa} \boldsymbol{\gamma} \boldsymbol{\gamma}^T \mathbb{E}\bigg[ \chi^{-4}_{r+4} (\delta) \bigg] \nonumber \\
& \qquad -2 (r-2) \boldsymbol{\kappa} \boldsymbol{\gamma} \boldsymbol{\gamma}^T \boldsymbol{\kappa}^T  \mathbb{E}\bigg[ \chi^{-2}_{r+4} (\delta) \bigg]
\end{align}
For the AMSE of positive James-Stein estimator, we have:

\begin{align}\label{e4}
MSE(\hat{\boldsymbol{\beta}}^{PJSE}) &=\mathbb{E}\bigg[ \sqrt{n}(\hat{\boldsymbol{\beta}}^{PJSE}-\boldsymbol{\beta})\sqrt{n}(\hat{\boldsymbol{\beta}}^{PJSE}-\boldsymbol{\beta})^T \bigg] \nonumber\\
& = \mathbb{E}\bigg[\bigg\{\sqrt{n}(\hat{\boldsymbol{\beta}}^{JSE}-\boldsymbol{\beta}) + Z_3 I_{(F_n<(r-2))} \bigg\} \bigg\{\sqrt{n}(\hat{\boldsymbol{\beta}}^{JSE}-\boldsymbol{\beta}) + Z_3 I_{(F_n< (r-2))} \bigg\}^T \bigg]  \nonumber\\
& = \mathbb{E}\bigg[ \sqrt{n}(\hat{\boldsymbol{\beta}}^{JSE}- \boldsymbol{\beta})\sqrt{n}(\hat{\boldsymbol{\beta}}^{JSE}- \boldsymbol{\beta})^T\bigg] + \mathbb{E}\bigg[\sqrt{n}(\hat{\boldsymbol{\beta}}^{JSE}- \boldsymbol{\beta}) Z_3^T I_{(F_n< (r-2))}\bigg]  \nonumber\\
& \qquad + \mathbb{E}\bigg[\sqrt{n}(\hat{\boldsymbol{\beta}}^{JSE}- \boldsymbol{\beta})^T Z_3 I_{(F_n< (r-2))}\bigg]+ \mathbb{E}\bigg[Z_3 Z_3^T  I_{(F_n< (r-2))}\bigg] \nonumber\\
& = AMSE(\hat{\boldsymbol{\beta}}^{JSE}) + 2 \underbrace{ \mathbb{E}\bigg[\sqrt{n}(\hat{\boldsymbol{\beta}}^{JSE}- \boldsymbol{\beta}) Z_3^T I_{(F_n< (r-2))}\bigg]}_{T_3}+\underbrace{\mathbb{E}\bigg[Z_3 Z_3^T  I_{(F_n< (r-2))}\bigg]}_{T_4}
\end{align}
where
\begin{align}\label{m3}
T_3 &=  \mathbb{E}\bigg[\sqrt{n}(\hat{\boldsymbol{\beta}}^{JSE}- \boldsymbol{\beta}) Z_3^T I_{(F_n< (r-2))}\bigg] \nonumber\\
 & = \mathbb{E}\bigg[\bigg(Z_2+ \{1- (r-2)F_n^{-1}\} Z_3\bigg) Z_3^T I_{(F_n< (r-2))}\bigg] \nonumber\\
& = \mathbb{E}\bigg[Z_2 Z_3^T I_{(F_n< c)}+ Z_3 Z_3^T \{1- cF_n^{-1}\} I_{(F_n< (r-2))}\bigg]\nonumber \\
& = \mathbb{E}\bigg[ \mathbb{E}\bigg[Z_2 Z_3^T I_{(F_n< c)}\vert Z_3\bigg]\bigg]+  \mathbb{E}\bigg[Z_3 Z_3^T \{1- cF_n^{-1}\} I_{(F_n< c)}\bigg]\nonumber\\
& = \mathbb{E}\bigg[ \mathbb{E}\bigg[Z_2 | Z_3\bigg]Z_3^T I_{(F_n< c)}\bigg]+ \mathbb{E}\bigg[Z_3 Z_3^T \{1- cF_n^{-1}\} I_{(F_n< c)}\bigg]\nonumber\\
& =\mathbb{E}\bigg[  \bigg\{ \mathbb{E}(Z_2)+ cov(Z_2, Z_3) [cov(Z_3)]^{-1} \big( Z_3-\mathbb{E}(Z_2)) \bigg\} Z_3^T I_{(F_n< c)}\bigg]\nonumber\\
& \qquad + \mathbb{E}\bigg[Z_3 Z_3^T \{1- cF_n^{-1}\} I_{(F_n< (r-2))}\bigg]\nonumber\\
 & =\mathbb{E}\bigg[\mathbb{E}(Z_2) Z_3^T I_{(F_n< c)}\bigg]+ \mathbb{E}\bigg[Z_3 Z_3^T \{1- cF_n^{-1}\} I_{(F_n< c)}\bigg]\nonumber\\
 & = - \boldsymbol{\kappa} \boldsymbol{\gamma} \mathbb{E}\bigg[Z_3^T I_{(F_n < c)}\bigg]+ cov(Z_3)\mathbb{E}\bigg[  \{1- c\chi^{-2}_{r+2} (\delta) \} I_{(\chi^{-2}_{r+2} (\delta) < (r-2))}\bigg]\nonumber\\
& \qquad +\mathbb{E}(Z_3)\mathbb{E}(Z_3)^T \mathbb{E}\bigg[\{1- (r-2)\chi^{-2}_{r+4} (\delta) \} I_{(\chi^{-2}_{r+4} (\lambda) < c)}\bigg]\nonumber\\
& = - \boldsymbol{\kappa} \boldsymbol{\gamma} \boldsymbol{\gamma}^T \boldsymbol{\kappa}^T H_{q+2}((r-2); \delta) +  \boldsymbol{\kappa}_0 \mathbb{E}\bigg[ \{1- (r-2)\chi^{-2}_{r+2} (\lambda) \} I_{(\chi^{-2}_{r+2} (\delta) < (r-2))}\bigg]\nonumber\\
& \qquad +\boldsymbol{\kappa} \boldsymbol{\gamma} \boldsymbol{\gamma}^T \boldsymbol{\kappa}^T \mathbb{E}\bigg[\{1- c\chi^{-2}_{q+4} (\delta) \} I_{(\chi^{-2}_{r+4} (\delta) < (r-2))}\bigg]\nonumber
\end{align}
\begin{align}
T_3& = - \boldsymbol{\kappa} \boldsymbol{\gamma} \boldsymbol{\gamma}^T \boldsymbol{\kappa}^T \Upsilon_{r+2}((r-2); \delta) + \boldsymbol{\kappa}_0  \Upsilon_{r+2} ((r-2); \delta) - (r-2) \boldsymbol{\kappa}_0 \mathbb{E}\bigg[\chi^{-2}_{r+2} (\lambda)I_{(\chi^{-2}_{r+2} (\delta) < (r-2))}\bigg]
\nonumber\\
& \qquad +\boldsymbol{\kappa} \boldsymbol{\gamma} \boldsymbol{\gamma}^T \boldsymbol{\kappa}^T  \Upsilon_{r+4}((r-2); \lambda) - (r-2)  \boldsymbol{\kappa} \boldsymbol{\gamma} \boldsymbol{\gamma}^T \boldsymbol{\kappa}^T  \mathbb{E}\bigg[\chi^{-2}_{r+4} (\delta) I_{(\chi^{-2}_{r+4} (\delta) <(r-2))}\bigg]
\end{align}
and
\begin{align} \label{m4}
T_4 &=  \mathbb{E}\bigg[Z_3 Z_3^T  I_{(F_n< (r-2))}\bigg] \nonumber \\
& =  cov(Z_3)  \mathbb{E}\bigg[I_{(\chi^{-2}_{r+2} (\delta)< (r-2))}\bigg] + \mathbb{E}(Z_3)\mathbb{E}(Z_3)^T  \mathbb{E}\bigg[I_{(\chi^{-2}_{r+4} (\delta)< (r-2))}\bigg]  \nonumber \\
& =  \phi \boldsymbol{\kappa}_0  \mathbb{E}\bigg[I_{(\chi^{-2}_{r+2} (\delta)< (r-2))}\bigg] +  \boldsymbol{\kappa} \xi \xi^T \boldsymbol{\kappa}^T  \mathbb{E}\bigg[I_{(\chi^{-2}_{r+4} (\delta)< (r-2))}\bigg]  \nonumber \\
& = \boldsymbol{\kappa}_0   \Upsilon_{r+2}(r-2; \delta) +  \boldsymbol{\kappa} \boldsymbol{\gamma} \boldsymbol{\gamma}^T \boldsymbol{\kappa}^T   \Upsilon_{r+4}(r-2; \delta)
\end{align}
Replacing \eqref{m3} and \eqref{m4} in \eqref{e4}, we have:
\begin{align}\label{e5}
MSE(\hat{\pmb{\beta}}^{PJSE}) &=AMSE(\hat{\pmb{\beta}}^{JSE}) -2 \boldsymbol{\kappa} \boldsymbol{\gamma} \boldsymbol{\gamma}^T \boldsymbol{\kappa}^T \bigg[ \Upsilon_{r+2}(r-2; \delta)-\Upsilon_{r+4}(r-2; \delta) \bigg] \nonumber\\
& \qquad  + 3\boldsymbol{\kappa}_0  \Upsilon_{r+2} (r-2; \delta)  - 2(r-2)\boldsymbol{\kappa}_0 \mathbb{E}\bigg[\chi^{-2}_{r+2} (\delta)I_{(\chi^{-2}_{r+2} (\delta) < (r-2))}\bigg]  \nonumber\\
& \qquad -2(r-2)\boldsymbol{\kappa} \boldsymbol{\gamma} \boldsymbol{\gamma}^T \boldsymbol{\kappa}^T \mathbb{E}\bigg[\chi^{-2}_{r+4} (\delta)I_{(\chi^{-2}_{r+4} (\delta) < (r-2))}\bigg] \nonumber\\
& \qquad +2 \boldsymbol{\kappa} \boldsymbol{\gamma} \boldsymbol{\gamma}^T \boldsymbol{\kappa}^T  \Upsilon_{r+4} (r-2; \delta)
\end{align}
For pretest estimator, we have:
\begin{align}
AMSE(\hat{\boldsymbol{\beta}}^{PTE}) &=\mathbb{E}\bigg[ \sqrt{n}(\hat{\boldsymbol{\beta}}^{PTE}-\boldsymbol{\beta})\sqrt{n}(\hat{\boldsymbol{\beta}}^{PTE}-\boldsymbol{\beta})^T \bigg] \nonumber\\
& = \mathbb{E}\bigg[\big(Z_1- Z_3 I_{(F_n \leq \chi^2_{(r, \alpha)})}\big)\big(Z_1- Z_3 I_{(F_n \leq \chi^2_{(r, \alpha)})}\big)^T \bigg] \nonumber\\
& = \mathbb{E}\bigg[Z_1Z_1^T\bigg]- \mathbb{E}\bigg[Z_1Z_3^T I_{(F_n \leq \chi^2_{(r, \alpha)})}\bigg]- \mathbb{E}\bigg[Z_1^TZ_3 I_{(F_n \leq \chi^2_{(r, \alpha)})}\bigg] \nonumber\\
& \qquad + \mathbb{E}\bigg[Z_3Z_3^T I_{(F_n \leq \chi^2_{(r, \alpha)})}\bigg]  \nonumber\\
& = cov(Z_1)+ \bigg[\mathbb{E}(Z_1)\bigg] \bigg[\mathbb{E}(Z_1)\bigg]^T -2 \mathbb{E}\bigg[Z_1^TZ_3 I_{(F_n \leq \chi^2_{(r, \alpha)})}\bigg]\nonumber\\
& \qquad + \mathbb{E}\bigg[Z_3Z_3^T I_{(F_n \leq \chi^2_{(r, \alpha)})}\bigg]  \nonumber
\end{align}
\begin{align}
AMSE(\hat{\boldsymbol{\beta}}^{PTE})& = \boldsymbol{F}^{-1}  -2 \mathbb{E}\bigg[\mathbb{E}\bigg[Z_1Z_3^T I_{(F_n \leq \chi^2_{(r, \alpha)})}|Z_3 \bigg] \bigg]+  \mathbb{E}\bigg[Z_3Z_3^T I_{(F_n \leq \chi^2_{(r, \alpha)})}\bigg]  \nonumber\\
& = \boldsymbol{F}^{-1}  -2 \mathbb{E}\bigg[\mathbb{E}\bigg[Z_1|Z_3 \bigg] Z_3^T I_{(F_n \leq \chi^2_{(r, \alpha)})}\bigg]+  \mathbb{E}\bigg[Z_3Z_3^T I_{(F_n \leq \chi^2_{(r, \alpha)})}\bigg]  \nonumber\\
& = \boldsymbol{F}^{-1}  -2 \mathbb{E}\bigg[ \bigg\{\mathbb{E}(Z_1) + cov(Z_1, Z_3) \bigg[ cov(Z_3)\bigg]^{-1}(Z_3- \mathbb{E}(Z_3) )\bigg\} Z_3^T I_{(F_n \leq k)}\bigg] \nonumber\\
&\qquad +  \mathbb{E}\bigg[Z_3Z_3^T I_{(F_n \leq \chi^2_{(r, \alpha)})}\bigg]  \nonumber\\
& = \boldsymbol{F}^{-1}  -2   \mathbb{E}\bigg[\big(Z_3 - \boldsymbol{\kappa} \boldsymbol{\gamma} \big) Z_3^T I_{(F_n \leq \chi^2_{(r, \alpha)})}\bigg]  + \mathbb{E}\bigg[Z_3Z_3^T I_{(F_n \leq \chi^2_{(r, \alpha)})}\bigg]  \nonumber\\
 & = \boldsymbol{F}^{-1}  -\mathbb{E}\bigg[Z_3Z_3^T I_{(F_n \leq \chi^2_{(r, \alpha)})}\bigg]  +2\boldsymbol{\kappa} \boldsymbol{\gamma}   \mathbb{E}\bigg[Z_3^T I_{(F_n \leq \chi^2_{(r, \alpha)})}\bigg]  \nonumber\\
& = \boldsymbol{F}^{-1}  - cov(Z_3) \Upsilon_{r+2}(\chi^2_{(r, \alpha)}; \delta)- \bigg[\mathbb{E}(Z_1)\bigg]\bigg[\mathbb{E}(Z_1)\bigg]^T \Upsilon_{r+4}(\chi^2_{(r, \alpha)}; \delta) \nonumber\\
& \qquad + 2\boldsymbol{\kappa} \boldsymbol{\gamma} \boldsymbol{\gamma}^T \boldsymbol{\kappa}^T H_{r+2}(\chi^2_{(r, \alpha)}; \delta) \nonumber\\
& = \boldsymbol{F}^{-1} -  \boldsymbol{\kappa}_0 \Upsilon_{r+2}(\chi^2_{(r, \alpha)}; \delta) + \boldsymbol{\kappa} \boldsymbol{\gamma} \boldsymbol{\gamma}^T \boldsymbol{\kappa}^T\big(2 \Upsilon_{r+2}(\chi^2_{(r, \alpha)}; \delta)-\Upsilon_{r+4}(\chi^2_{(r, \alpha)}; \delta) \big)\nonumber
\end{align}

\end{document}